\newtheorem{theorem}{Theorem}[section]
\newtheorem{definition}[theorem]{Definition}
\newtheorem{prop}[theorem]{Proposition}
\newtheorem{coro}[theorem]{Corollary}
\theoremstyle{definition}
\newtheorem{rem}[theorem]{Remark}
\newtheorem{exa}[theorem]{Example}
\numberwithin{equation}{section}
\newcommand\eps{\varepsilon}
\newcommand\ZZ{\mathbb Z}
\newcommand\QQ{\mathbb{Q}}
\newcommand\CC{\mathbb{C}}
\newcommand\FF{\mathbb{F}}
\newcommand\lla{\langle\!\langle}
\newcommand\rra{\rangle\!\rangle}
\DeclareMathOperator{\Tr}{Tr}
\title[Algebraicity of zeta functions]
{Algebraicity of the zeta function\\
associated to a matrix over\\ a free group algebra}
\author[C. Kassel]{Christian Kassel}
\address{Christian Kassel: 
Institut de Recherche Math\'e\-ma\-tique Avanc\'ee,
CNRS \& Universit\'e de Strasbourg,
7 rue Ren\'{e} Descartes, 67084 Strasbourg, France}
\email{kassel@math.unistra.fr}
\urladdr{www-irma.u-strasbg.fr/\raise-2pt\hbox{\~{}}kassel/}
\author[C. Reutenauer]{Christophe Reutenauer}
\address{Christophe Reutenauer:
Math\'ematiques, Universit\'e du Qu\'ebec \`a Montr\'eal,
Montr\'eal, CP 8888, succ.\ Centre Ville, Canada H3C 3P8}
\email{reutenauer.christophe@uqam.ca}
\urladdr{www.lacim.uqam.ca/\raise-2pt\hbox{\~{}}christo/}
\keywords{Noncommutative formal power series, language, zeta function, algebraic function}
\subjclass[2010]{(Primary)
05A15, % Exact enumeration problems, generating functions
14G10, % %Zeta-functions and related questions
68Q70; %Algebraic theory of languages and automata
(Secondary)
05E15, %Combinatorial aspects of groups and algebras
68R15, %Combinatorics on words 
20M35, %Semigroups in automata theory, linguistics
14H05 %Algebraic functions; function fields
}
\begin{document}

\begin{abstract}
Following and generalizing a construction by Kontsevich,
we associate a zeta function to any matrix with entries 
in a ring of noncommutative Laurent polynomials with integer coefficients.
We show that such a zeta function is an algebraic function.
\end{abstract}

\maketitle

\section{Introduction}\label{sec-main}

Fix a commutative ring~$K$. 
Let $F$ be a free group on a finite number of generators $X_1, \ldots, X_n$ and 
\begin{equation*}
K F = K\langle X_1, X_1^{-1}, \ldots, X_n, X_n^{-1}\rangle
\end{equation*}
be the corresponding group algebra: 
equivalently, it is the algebra of noncommutative Laurent polynomials with coefficients in~$K$.
Any element $a \in K F$ can be uniquely written as a finite sum of the form
\begin{equation*}
a = \sum_{g \in F} \, (a,g) \, g \, ,
\end{equation*}
where $(a,g) \in K$.

Let $M$ be a $d\times d$-matrix with coefficients in~$K F$.
For any $n\geq 1$ we may consider the $n$-th power~$M^n$ of~$M$ and its trace $\Tr(M^n)$, 
which is an element of~$K F$.
We define the integer~$a_n(M)$ as the coefficient of~$1$ in the trace of~$M^n$:
\begin{equation}\label{an}
a_n(M) = \left( \Tr(M^n), 1 \right) \, .
\end{equation}
Let $g_M$ and $P_M$ be the formal power series
\begin{equation}\label{PM}
g_M = \sum_{n\geq1} \, a_n(M) \, t^n
\quad\text{and}\quad
P_M = \exp\left( \sum_{n\geq1} \, a_n(M) \, \frac{t^n}{n} \right).
\end{equation}
They are related by
\begin{equation*}
g_M = t \, \frac{d \log(P_M)}{dt} \, .
\end{equation*}

We call $P_M$ the \emph{zeta function} of the matrix~$M$
by analogy with the zeta function of a noncommutative formal power series
(see Section\,2.1); the two concepts will be related in Proposition\,4.1. 

The motivation for the definition of~$P_M$ comes from the well-known identity expressing the inverse of the 
reciprocal polynomial of the characteristic polynomial of a matrix~$M$ with entries in a commutative ring
\begin{equation*}
\frac{1}{\det(1 - tM)} = \exp\left( \sum_{n\geq1} \, \Tr(M^n) \, \frac{t^n}{n} \right) .
\end{equation*}

Note that for any scalar $\lambda \in K$, the corresponding series for the matrix~$\lambda M$ become
\begin{equation}\label{rescaling}
g_{\lambda M}(t) = g_M(\lambda t)
\quad\text{and}\quad
P_{\lambda M}(t) = P_M(\lambda t) \, .
\end{equation}

Our main result is the following; it was inspired by Theorem~1 of~\cite{Ko}.

\begin{theorem}\label{main}
For each matrix $M \in M_d(K F)$ where $K= \QQ$ is the ring of rational numbers, the formal power series $P_M$ is algebraic.
\end{theorem}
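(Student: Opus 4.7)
My plan is to identify $P_M(t)$ with the zeta function (in the sense of Section\,2.1) of a rational noncommutative series built from $M$, and then to derive its algebraicity by combining Chomsky--Schützenberger-type theorems on unambiguous context-free languages with the Lyndon-word structure of the free monoid.

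First, I would lift $M$ to a matrix $\tilde M \in M_d(\QQ\langle A\rangle)$ over the free associative algebra on $A=\{X_1,X_1^{-1},\ldots,X_n,X_n^{-1}\}$, by choosing the reduced-word representative of each monomial appearing in an entry of $M$. Viewing $\tilde M$ as the weighted adjacency matrix of a finite $A^*$-labelled digraph, $a_n(M)$ becomes the weighted count of length-$n$ closed walks whose concatenated edge-label belongs to the Dyck-like language $\mathcal D\subset A^*$ of words that reduce to the empty word in $F$.

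Because $F$ is torsion-free, a word in $A^*$ is $F$-trivial if and only if any of its powers is; grouping closed walks by their primitive cyclic roots therefore turns the exponential defining $P_M$ into an Euler product
\[
P_M(t)=\prod_{[\gamma]}\bigl(1-\lambda_\gamma\,t^{d_\gamma}\bigr)^{-1},
\]
the product ranging over cyclic classes of primitive closed walks $\gamma$ in the digraph of $\tilde M$ --- each equipped with a specified monomial of $\tilde M$ chosen at every step --- of length $d_\gamma$ and with $F$-trivial concatenated label, where $\lambda_\gamma\in\QQ$ is the product of the corresponding scalar coefficients. The next move is to identify this product with the zeta function $\zeta_\Sigma(t)$ of a rational noncommutative series $\Sigma$ constructed from $\tilde M$, which is the content one expects from the promised Proposition\,4.1.

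The main obstacle is the final step, the algebraicity of $\zeta_\Sigma$ for rational $\Sigma$. I would attack it by combining: (i)~an unambiguous context-free grammar for $\mathcal D$ derived from the free-group reduction rules; (ii)~Schützenberger's theorem that the Hadamard product of a rational noncommutative series with the characteristic series of an unambiguous context-free language is algebraic, together with the preservation of algebraicity under commutative morphisms $A^*\to\QQ[[t]]$; and (iii)~a Lyndon-word reformulation of the Euler product as the commutative image of a noncommutative algebraic series, transferring the primitive-cyclic bookkeeping to a Lyndon factorisation of the free monoid. Carrying out~(iii) so that the $F$-trivial primitive-cycle condition is matched by the Lyndon decomposition while the algebraic structure needed for the theorems in~(ii) is preserved is where the real technical work lies.
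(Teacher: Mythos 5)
Your steps (i) and (ii) of the final paragraph, together with the digraph reformulation and the Euler product over primitive $F$-trivial cycles, correctly reproduce the first part of the paper's argument: they yield (a) that $P_M$ has \emph{integer} coefficients (via cyclicity of the associated noncommutative series and the Euler product expansion), and (b) that the logarithmic derivative $g_M = t\,d\log(P_M)/dt$ is an algebraic function (via the Hadamard product of a rational series with the characteristic series of the words reducing to $1$ in $F$, which is algebraic by Chomsky--Sch\"utzenberger). Up to that point you are on the paper's track.

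The gap is your step (iii). Passing from (a) and (b) to the algebraicity of $P_M$ itself is not a formal-language or Lyndon-word manipulation: it is the deep arithmetic input of the proof. The paper invokes a case of the Grothendieck--Katz $p$-curvature conjecture (due to Andr\'e and Bost, elaborating on Chudnovsky--Chudnovsky): if $f\in\ZZ[[t]]$ has integer coefficients and $t\,d\log f/dt$ is algebraic, then $f$ is algebraic. The integrality hypothesis is essential ($f=\exp(t)$ shows that algebraicity of the logarithmic derivative alone proves nothing), so any substitute argument must use it, and no purely combinatorial route is known in general. Your proposed Lyndon reformulation would require exhibiting $\prod_{[\gamma]}(1-\lambda_\gamma t^{d_\gamma})^{-1}$ as the commutative image of an algebraic noncommutative series; but the multiplicativity over the Lyndon factorization of $A^*$ is not an operation that preserves rationality or algebraicity, and the set of primitive $F$-trivial cycles is not itself an unambiguous context-free language in any way that feeds into Sch\"utzenberger's theorems. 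A combinatorial proof along such lines is known only in the very special case $d=1$ with $F$ free on one generator (Reutenauer--Robado); in general the step you flag as ``where the real technical work lies'' is precisely the missing theorem, and it is of a different (arithmetic-geometric) nature than the tools you propose.
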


The special case $d=1$ is due to Kontsevich~\cite{Ko}. 
A combinatorial proof in the case $d=1$ and $F$ is a free group on one generator
appears in~\cite{RR}.
The algebraicity of the series~$g_M$, which is the logarithmic derivative of~$P_M$, was already known
(see\,\cite{Sa} and also\,\cite{GB}).

Observe that by the rescaling equalities\,\eqref{rescaling} it suffices to prove the theorem when $K= \ZZ$ is the ring of integers.

It is crucial for the veracity of Theorem~\ref{main}
that the variables do not commute: 
for instance, if $a = x+y+x^{-1}+y^{-1} \in \ZZ[x, x^{-1}, y, y^{-1}]$, 
where $x$ and $y$ are commuting variables,  then 
$\exp(\sum_{n\geq 1}\, (a^n, 1) \, t^n/n)$ is a formal power series with integer coefficients, 
but not an algebraic function
(this follows from Example~3 in\,\cite[Sect.\,1]{Bm}).

The paper is organized as follows.
In Section\,\ref{sec-cyclic} we define the zeta function~$\zeta_S$ of a 
noncommutative formal power series~$S$ and show that it can be
expanded as an infinite product 
under a cyclicity condition that is satisfied by the characterististic series of cyclic languages.

In Section\,\ref{sec-alg} we recall the notion of an algebraic noncommutative formal power series
and some of their properties.

In Section~\ref{proof-main} we reformulate the zeta function of a matrix as 
the zeta function of a noncommutative formal power series before giving
the proof of Theorem~\ref{main}; the latter follows the steps sketched in~\cite{Ko} 
and relies on the results of the previous sections as well as on an algebraicity result
by Andr\'e\,\cite{An2} elaborating on an idea of D.\ and G.~Chudnovsky.

We concentrate on two specific matrices in Section~\ref{sec-exa}.
We give a closed formula for the zeta function of the first matrix; 
its nonzero coefficients count the planar rooted bicubic maps
as well as Chapoton's ``new intervals'' in a Tamari lattice (see~\cite{Cp, Tu}).

\section{Cyclic formal power series}\label{sec-cyclic}

\subsection{General definitions}\label{general}

As usual, if $A$ is a set, we denote by $A^*$ the free monoid on~$A$: it consists of all words on the
alphabet~$A$, including the empty word~$1$.
Let $A^+ = A - \{ 1\}$.

Recall that $w\in A^+$ is \emph{primitive} if it cannot be written as $u^r$ for any integer $r\geq 2$ and any $u\in A^+$.
Two elements $w,w' \in A^+$ are \emph{conjugate} if $w = uv$ and $w'= vu$ for some $u,v \in A^*$.

Given a set~$A$ and a commutative ring~$K$, 
let $K\lla A \rra$ be the algebra of noncommutative formal power series on the alphabet~$A$. 
For any element $S \in K\lla A \rra$ and any $w\in A^*$, we define the coefficient $(S,w) \in K$ by
\begin{equation*}
S = \sum_{w\in A^*} \, (S,w) \, wÊ\, .
\end{equation*}

As an example of such noncommutative formal power series, take the characteristic series 
$\sum_{w \in L} \, w$
of a language $L \subseteq A^*$.
In the sequel we shall identify a language with its characteristic series.

The \emph{generating series} $g_S$ of an element $S\in K\lla A \rra$ is the image of~$S$ under the algebra map
$\eps : K\lla A \rra \to K[[t]]$ sending each $a \in A$ to the variable~$t$. We have
\begin{equation}\label{def-g}
g_S - (S,1) = \sum_{w\in A^+} \, (S,w) \, t^{|w|}
= \sum_{n\geq 1} \left( \sum_{|w|=n} \, (S,w) \right)  t^n \, ,
\end{equation}
where $|w|$ is the length of~$w$.

The \emph{zeta function} $\zeta_S$ of $S\in K\lla A \rra$ is defined by
\begin{equation}\label{def-z}
\zeta_S 
= \exp\left( \sum_{w\in A^+} \, (S,w) \, \frac{t^{|w|}}{|w|} \right)
= \exp\left( \sum_{n\geq 1} \left(\sum_{|w| = n} \, (S,w) \right) \frac{t^n}{n} \right)  .
\end{equation}

The formal power series $g_S$ and $\zeta_S$ are related by
\begin{equation}\label{dlog}
t \, \frac{d \log(\zeta_S)}{dt} = t \, \frac{\zeta'_S}{\zeta_S} = g_S - (S,1) \, ,
\end{equation}
where $\zeta'_S$ is the derivative of~$\zeta_S$ with respect to the variable~$t$.

\subsection{Cyclicity}

\begin{definition}
An element $S \in K\lla A \rra$ is cyclic if
\begin{itemize}
\item[(i)] 
$\forall u,v \in A^*, \;\; (S,uv) = (S,vu)$ and

\item[(ii)]
$\forall w \in A^+, \forall r \geq 2, \;\; (S,w^r) = (S,w)^r \, .$
\end{itemize}
\end{definition}

Cyclic languages provide examples of cyclic formal power series.
Recall from~\cite[Sect.~2]{BR1} that a language $L \subseteq A^*$ is \emph{cyclic} if 
\begin{itemize}
\item[(1)] 
$\forall u,v \in A^*, \;\; uv \in L \Longleftrightarrow vu \in L\, ,$

\item[(2)]
$\forall w \in A^+, \forall r \geq 2, \;\; w^r \in L \Longleftrightarrow w \in L \, .$
\end{itemize}
The characteristic series of a cyclic language is a cyclic formal power series in the above sense.

Let $L$ be any set of representatives of conjugacy classes of primitive elements of~$A^+$.

\begin{prop}
If $S\in K\lla A \rra$ is a cyclic formal power series, then 
\begin{equation*}
\zeta_S = \prod_{\ell \in L} \, \frac{1}{1-(S,\ell) \, t^{|\ell|}} \, .
\end{equation*}
\end{prop}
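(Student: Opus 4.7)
The plan is to take the logarithm of the conjectural product formula and match it term by term with the exponent in the definition of $\zeta_S$, using the two cyclicity conditions to bridge between the two expressions.

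First I would expand the right-hand side using $-\log(1-x) = \sum_{r\geq1} x^r/r$ to write
\begin{equation*}
\log \prod_{\ell \in L} \frac{1}{1-(S,\ell)\,t^{|\ell|}}
= \sum_{\ell \in L} \sum_{r\geq 1} \frac{(S,\ell)^r}{r} \, t^{r|\ell|}.
\end{equation*}
Applying cyclicity condition (ii) (trivially also valid for $r=1$), the numerator $(S,\ell)^r$ becomes $(S,\ell^r)$, so this equals $\sum_{\ell \in L} \sum_{r\geq 1} (S,\ell^r) \, t^{r|\ell|}/r$.

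Next I would reorganize the sum $\sum_{w \in A^+} (S,w) \, t^{|w|}/|w|$ appearing in the exponent of $\zeta_S$ by the standard Lyndon-type decomposition: every $w \in A^+$ admits a unique writing $w = u^r$ with $u$ primitive and $r\geq 1$. Grouping by the conjugacy class of the primitive root $u$, represented by a unique $\ell \in L$ of length $|\ell|$, I use the classical fact that a primitive word of length $n$ has exactly $n$ distinct cyclic conjugates. Therefore, for each pair $(\ell,r)$, there are exactly $|\ell|$ words $w = u^r$ with $u$ conjugate to $\ell$; each such $w$ has length $r|\ell|$ and, by cyclicity condition (i) applied to the conjugacy of $u^r$ with $\ell^r$, satisfies $(S,w) = (S,\ell^r)$. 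Summing gives
\begin{equation*}
\sum_{w \in A^+} (S,w) \, \frac{t^{|w|}}{|w|}
= \sum_{\ell \in L} \sum_{r\geq 1} |\ell| \cdot (S,\ell^r) \cdot \frac{t^{r|\ell|}}{r|\ell|}
= \sum_{\ell \in L} \sum_{r\geq 1} \frac{(S,\ell^r)}{r} \, t^{r|\ell|},
\end{equation*}
which matches the expansion obtained above. Exponentiating yields the desired product formula.

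The main subtlety, and the only nontrivial combinatorial input, is the observation that a primitive word $\ell$ of length $n$ has exactly $n$ distinct cyclic conjugates (equivalently, that $\ell^r$ has exactly $n$ distinct cyclic conjugates for each $r$); this is what provides the crucial factor $|\ell|$ that cancels the $|\ell|$ in the denominator $|w|=r|\ell|$. Everything else is bookkeeping: cyclicity (i) pools the coefficients over each conjugacy class, and cyclicity (ii) converts $(S,\ell^r)$ into the geometric factor $(S,\ell)^r$ needed for the logarithmic series to close up into a product of rational factors.
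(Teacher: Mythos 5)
Your proof is correct and follows essentially the same route as the paper: the paper compares logarithmic derivatives rather than logarithms, but the combinatorial core is identical — the unique factorization $w=u^r$ with $u$ primitive, the fact that the conjugacy class of $u$ (equivalently of $u^r$) has exactly $|u|=|\ell|$ elements, and the two cyclicity axioms to identify $(S,w)$ with $(S,\ell)^r$. Nothing is missing.
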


\begin{proof}
Since both sides of the equation have the same constant term~$1$, it suffices to prove that they have the 
same logarithmic derivative. The logarithmic derivative of the RHS multiplied by~$t$ is equal to
\begin{equation*}
\sum_{\ell\in L} \, \frac{|\ell| \, (S,\ell) \, t^{|\ell|}}{1-(S,\ell) \, t^{|\ell|}} \, ,
\end{equation*}
which in turn is equal to
\begin{equation*}
\sum_{\ell\in L, \, k\geq 1} \, |\ell| \, (S,\ell)^k \, t^{k|\ell|} \, .
\end{equation*}
In view of~\eqref{def-g} and~\eqref{dlog} it is enough to check that for all $nÊ\geq 1$,
\begin{equation}\label{check}
\sum_{|w| = n} \, (S,w) = \sum_{\ell\in L, \, k\geq 1, \, k|\ell| = n} \, |\ell| \, (S,\ell)^k \, .
\end{equation}
Now any word~$w = u^k$ is the $k$-th power of a unique primitive word~$u$, 
which is the conjugate of a unique element~$\ell\in L$.
Moreover, $w$ has exactly $|\ell|$~conjugates and since~$S$ is cyclic, we have
\begin{equation*}
(S,w) = (S,u^k) = (S,u)^k = (S,\ell)^k .
\end{equation*}
From this Equation~\eqref{check} follows immediately.
\end{proof}

\begin{coro}\label{zeta-entier-coro}
If a cyclic formal power series~$S$ has integer coefficients, i.e., $(S,w) \in \ZZ$ for all $w\in A^*$, then 
so has $\zeta_S$.
\end{coro}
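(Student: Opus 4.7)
The plan is to derive the corollary directly from the infinite product formula established in the preceding proposition. Since $S$ is cyclic with integer coefficients, the proposition gives
\begin{equation*}
\zeta_S = \prod_{\ell \in L} \, \frac{1}{1-(S,\ell) \, t^{|\ell|}} \, ,
\end{equation*}
where every scalar $(S,\ell)$ lies in $\ZZ$ by hypothesis.

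First I would observe that each individual factor is a power series with integer coefficients: for $c = (S,\ell) \in \ZZ$ and $m = |\ell| \geq 1$, we have the geometric expansion
\begin{equation*}
\frac{1}{1 - c \, t^{m}} = \sum_{k \geq 0} c^k \, t^{km} \in \ZZ[[t]].
\end{equation*}
Next I would note that the infinite product is well defined in $\ZZ[[t]]$: since $\ell \in L$ is primitive of length $|\ell| \geq 1$, each factor has constant term~$1$, and for any fixed $n \geq 0$ only the finitely many factors indexed by $\ell \in L$ with $|\ell| \leq n$ contribute to the coefficient of $t^n$ in the product. Hence the partial products stabilize in every degree, yielding a formal power series in $\ZZ[[t]]$.

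Finally, since $\zeta_S$ equals this product as formal power series, we conclude that $\zeta_S \in \ZZ[[t]]$.

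There is no real obstacle here; the corollary is a formal consequence of the product expansion together with the observation that $\ZZ[[t]]$ is closed under the formally convergent products arising from families of series with constant term~$1$. The only point to be careful about is the well-definedness of the infinite product, which is handled by the length bound on the index set~$L$.
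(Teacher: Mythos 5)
Your proof is correct and is exactly the route the paper intends: the corollary is stated as an immediate consequence of the product formula of the preceding proposition, each factor expanding as a geometric series in $\ZZ[[t]]$ and the product being formally convergent since (the alphabet being finite) only finitely many $\ell\in L$ satisfy $|\ell|\le n$. Nothing is missing.
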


\section{Algebraic noncommutative series}\label{sec-alg}

This section is essentially a compilation of well-known results on algebraic noncommutative series.

Recall that a 
\emph{system of proper algebraic noncommutative equations} is a finite set of equations
\begin{equation*}
\xi_i = p_i \, \quad i=1, \ldots, n \, ,
\end{equation*}
where $\xi_1, \ldots, \xi_n$ are noncommutative variables and $p_1, \ldots, p_n$
are elements of $K\langle \xi_1, \ldots, \xi_n, A\rangle$,
where $A$ is some alphabet. 
We assume that each~$p_i$ has no constant term and contains no monomial~$\xi_j$.
One can show that such a system has a unique solution $(S_1, \ldots, S_n)$, i.e., 
there exists a unique $n$-tuple $(S_1, \ldots, S_n) \in K\lla A \rra ^n$ such that
$S_i = p_i(S_1, \ldots, S_n, A)$ for all $i=1, \ldots, n$
and each~$S_i$ has no constant term
(see~\cite{Sch}, \cite[Th.~IV.1.1]{SS}, or \cite[Prop.\,6.6.3]{St}). 

If a formal power series $S \in K\lla A \rra$ differs by a constant from such a formal power series~$S_i$, we say that
$S$ is \emph{algebraic}.

\begin{exa}\label{exa-Lukas}
Consider the proper algebraic noncommutative equation
\begin{equation*}
\xi = a\xi^2 + b \, .
\end{equation*}
(Here $A = \{a,b\}$.) Its solution is of the form
\begin{equation*}
S = b + abb + aabbb + ababb + \cdots  .
\end{equation*}
One can show (see~\cite{Be}) that $S$ is the characteristic series of \L ukasiewicz's language, namely of the set of words
$w\in \{a,b\}^*$ such that $|w|_b = |w|_a +1$ and $|u|_a \geq |u|_b$ for all proper prefixes~$u$ of~$w$.
\end{exa}

Recall also that $S \in K\lla A \rra$ is \emph{rational} if it belongs to the smallest subalgebra of $K\lla A \rra$ 
containing~$K\langle A \rangle$ and closed under inversion.
By a theorem of Sch\"utzenberger (see\,\cite[Th.\,I.7.1]{BR2}), a formal power series $S \in K\lla A \rra$ is rational 
if and only if it is \emph{recognizable}, i.e., 
there exist an integer $n \geq 1$, 
a representation~$\mu$ of the free monoid~$A^*$ by matrices with entries in~$K$,
a row-matrix $\alpha$ and a column-matrix $\beta$ such that for all $w\in A^*$,
\begin{equation*}
(S,w) = \alpha \mu (w) \beta\, .
\end{equation*}

We now record two well-known theorems. 

\begin{theorem}\label{CS-thm}
(1) If $S\in K\lla A \rra$ is algebraic, then its generating series $g_S \in K[[t]]$ is algebraic in the usual sense.

(2) The set of algebraic power series is a subring of~$K\lla A \rra$.

(3) A rational power series is algebraic.

(4) The Hadamard product of a rational power series and an algebraic 
power series is algebraic. 

(5) Let $A= \{a_1, \ldots, a_n, a_1^{-1}, \ldots, a_n^{-1}\}$ 
and $L$ be the language consisting of all words on the alphabet~$A$ whose image 
in the free group on $a_1, \ldots, a_n$ is the identity element.
Then the characteristic series of~$L$ is algebraic.
\end{theorem}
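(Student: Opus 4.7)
The plan is to handle each of the five items separately, drawing on standard techniques for proper algebraic systems. For~(1), one applies the algebra map $\eps: K\lla A \rra \to K[[t]]$ sending each letter to~$t$ to a defining system $\xi_i = p_i$ for~$S$; the result is a commutative proper polynomial system for the $g_{S_i}$, from which iterated resultants in $K[t][y_1,\ldots,y_n]$ extract a nontrivial univariate annihilating polynomial over $K[t]$. For~(2), given algebraic $S$ and $T$ with their respective systems on disjoint variables, one merges the two systems and introduces an auxiliary variable~$\sigma$ with equation $\sigma = p_1 + q_1$ (for the sum) or $\sigma = \xi_1 \eta_1$ (for the product), after splitting off constant terms so that the enlarged system remains proper. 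For~(3), given a recognizable representation $\mu: A^* \to M_n(K)$ with $(S,w) = \alpha \mu(w) \beta$, introduce the matrix series whose entries are $T_{i,j} = \sum_w \mu(w)_{i,j}\, w$. The $T_{i,j} - \delta_{i,j}$ satisfy the proper linear system
\begin{equation*}
T_{i,j} - \delta_{i,j} \;=\; \sum_{a \in A}\sum_k \mu(a)_{i,k}\, a\, T_{k,j} \, ,
\end{equation*}
so they are algebraic, and $S$ is a $K$-linear combination of them, hence algebraic by~(2).

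The main obstacle is~(4), Sch\"utzenberger's Hadamard product theorem. The plan is the standard tensor-of-systems trick. Let $T$ satisfy a proper algebraic system $\eta_i = q_i(\eta_1, \ldots, \eta_m, A)$ and let $S$ be recognized by $(\alpha, \mu, \beta)$. Introduce matrix-valued series $\widehat{T}_i = \sum_w (T_i, w)\, \mu(w)\, w$. The multiplicativity of~$\mu$ shows that each scalar entry $(\widehat{T}_i)_{j,k}$ can be expressed via a proper algebraic system obtained from the $q_i$ by replacing each noncommutative monomial with a sum indexed by the intermediate matrix indices of its $\mu$-image, and the Hadamard product $S \odot T$ then arises as an $\alpha,\beta$-weighted linear combination of these entries and is therefore algebraic.

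Finally, for~(5), one gives~$L$ a direct context-free presentation. Given a nonempty $w = a_1 \cdots a_m \in L$ with first letter $a = a_1$, let~$k$ be the smallest index $\geq 1$ at which the prefix $a_1 \cdots a_k$ already represents the identity in~$F$. By tracking the reduced word associated to each prefix, one sees that throughout $1 \leq j \leq k-1$ this reduced word begins with~$a$, forcing $a_k = a^{-1}$, and the factorization $w = a\, u\, a^{-1}\, v$ with $u = a_2 \cdots a_{k-1}$ and $v = a_{k+1} \cdots a_m$ is the unique one in which $u, v \in L$. This yields the proper algebraic equation
\begin{equation*}
L \;=\; 1 + \sum_{a \in A} a\, L\, a^{-1}\, L \, ,
\end{equation*}
exhibiting the characteristic series of~$L$ as algebraic. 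The arguments for~(1)--(3) and~(5) are all short; the bulk of the technical work lies in setting up the tensor product of systems for~(4).
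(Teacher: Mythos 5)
The paper offers no proof of this theorem: it is stated as a compilation of known results, with items (1)--(4) attributed to Sch\"utzenberger and item (5) to Chomsky--Sch\"utzenberger (via Stanley, Example 6.6.8). Your arguments for (1)--(4) are the standard ones from those sources (elimination after applying $\eps$; merging proper systems; the linear system for a recognizable series; the ``hat'' construction $\widehat{T}=\sum_w (T,w)\mu(w)\,w$, whose matrix-multiplicativity turns the system for $T$ into one for the entries of $\widehat{T}$, with $S\odot T=\alpha\widehat{T}\beta$), and they are correct in outline.

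Item (5), however, contains a genuine error. The factorization $w=a\,u\,a^{-1}v$ with $u,v\in L$ is \emph{not} unique, so the identity of characteristic series
\begin{equation*}
L \;=\; 1+\sum_{a\in A} a\,L\,a^{-1}L
\end{equation*}
is false: the right-hand side counts words with multiplicity. Concretely, for $n=1$ and $w=a\,a^{-1}a\,a^{-1}$ you get two admissible factorizations, namely $u=1,\ v=a\,a^{-1}$ (your minimal $k=2$) and $u=a^{-1}a,\ v=1$ (since $a^{-1}a\in L$ and the fourth letter is $a^{-1}$). Counting words of length $4$ in $L$ for $n=1$ gives $\binom{4}{2}=6$, while your grammar produces $8$. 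Your existence argument (tracking reduced prefixes to show $x_k=a^{-1}$ at the minimal $k$) is fine, but minimality of $k$ is not implied by the mere requirement $u,v\in L$, so the grammar is ambiguous. The standard fix is to factor $L$ through its ``prime'' elements: writing $P_x$ for the set of nonempty words of $L$ beginning with $x$ none of whose proper nonempty prefixes lies in $L$, one has the unambiguous system
\begin{equation*}
L=\Bigl(\sum_{x\in A}P_x\Bigr)^{\!*},
\qquad
P_x = x\Bigl(\sum_{y\neq x^{-1}}P_y\Bigr)^{\!*}x^{-1},
\end{equation*}
where the restriction $y\neq x^{-1}$ is exactly what your version omits; this converts into a proper algebraic system and yields (5).
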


Items (1)--(4) of the previous theorem are due to Sch\"utzenberger~\cite{Sch}, 
Item~(5) to Chomsky and Sch\"utzenberger~\cite{CS} (see~\cite[Example\,6.6.8]{St}).

The second theorem is a criterion due to Jacob~\cite{Ja}.

\begin{theorem}\label{Ja-thm}
A formal power series $S\in K\lla A \rra$ is algebraic if and only if  
there exist a free group~$F$, a representation~$\mu$ of the free monoid~$A^*$
by matrices with entries in~$KF$, indices~$i,j$ and $\gamma \in F$ such that for all $w\in A^*$,
\begin{equation*}
(S,w) = \bigl( (\mu w)_{i,j}, \gamma \bigr).
\end{equation*}
\end{theorem}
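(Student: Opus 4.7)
The plan is to prove both implications, leveraging the results of Theorem~\ref{CS-thm}.

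\emph{Sufficiency.} Given the representation $\mu : A^* \to M_n(KF)$, I would reduce algebraicity of $S$ to the Chomsky--Sch\"utzenberger theorem, item~(5) of Theorem~\ref{CS-thm}. After restricting $F$ to the finitely generated subgroup containing $\gamma$ and the supports of the finitely many entries of the matrices $\mu(a)$, I would fix a free basis $X_1, \ldots, X_m$ of $F$, set $B = \{X_1^{\pm 1}, \ldots, X_m^{\pm 1}\}$, and let $\pi : B^* \to F$ be the canonical reduction map. I would lift each $\mu(a)$ to $\tilde{\mu}(a) \in M_n(K\langle B\rangle)$ by replacing every group element $g$ appearing in an entry by its reduced word $\bar{g} \in B^*$; extended multiplicatively this yields a monoid homomorphism $\tilde{\mu} : A^* \to M_n(K\langle B\rangle)$ whose composition with the projection $K\langle B\rangle \to KF$ recovers $\mu$. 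With $C = A \sqcup B$, the matrix
\[
\tilde{\Sigma} = \Bigl(I - \sum_{a \in A} \tilde{\mu}(a)\, a \Bigr)^{-1} \in M_n(K\lla C\rra)
\]
has rational entries, and $\tilde{\Sigma}_{i,j} = \sum_{w \in A^*} \tilde{\mu}(w)_{i,j}\, w$ is a rational series supported on $B^* A^*$. By item~(5) of Theorem~\ref{CS-thm} and a translation argument, the characteristic series of $L_\gamma = \pi^{-1}(\gamma)$, and hence that of $L_\gamma A^*$, is algebraic; combining it with $\tilde{\Sigma}_{i,j}$ via item~(4) of Theorem~\ref{CS-thm} gives an algebraic series $T \in K\lla C\rra$ whose coefficient at $uw$ equals $(\tilde{\mu}(w)_{i,j}, u)$ when $u \in L_\gamma$ and $0$ otherwise. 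Since
\[
\sum_{u \in L_\gamma} (\tilde{\mu}(w)_{i,j}, u) = ((\mu w)_{i,j}, \gamma) = (S, w),
\]
the erasing morphism $\phi : C^* \to A^*$ fixing $A$ and sending every letter of $B$ to $1$ carries $T$ to $S$, each coefficient being a finite sum because $\tilde{\mu}(w)_{i,j}$ is a polynomial.

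The remaining step, deducing algebraicity of $S = \phi(T)$ from that of $T$ despite $\phi$ being erasing, is the main obstacle I anticipate. I would resolve it either by substituting $b \mapsto 1$ for $b \in B$ in a proper algebraic system defining $T$ over $C$ and verifying (using the finite-fiber condition above) that the induced system over $A$ is still proper and admits $S$ as a component of its unique solution, or by invoking Nivat's theorem that rational transductions preserve algebraicity, applicable here because $\phi$ is finite-to-one on the support of $T$.

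\emph{Necessity.} Conversely, if $S$ is algebraic, I would realize it as a component of the unique solution of a proper algebraic system $\xi_i = p_i(\xi_1, \ldots, \xi_N, A)$, put this system in Greibach-like normal form, and interpret it as a weighted pushdown automaton with control states $\{1, \ldots, n\}$ and stack alphabet $\{\xi_1, \ldots, \xi_N\}$. Taking $F$ to be the free group on the stack alphabet, the reading of each letter $a \in A$, pushing the right-hand side of a chosen production or popping a variable whose derivation has just concluded, assembles into a representation $\mu : A^* \to M_n(KF)$ with $((\mu w)_{i_0, j_0}, 1) = (S, w)$ for suitable initial and final indices $i_0, j_0$; a final shift absorbs an arbitrary $\gamma$. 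The bookkeeping for the normal-form conversion is routine but detailed, and one must verify carefully that the pushdown encoding lands in $M_n(KF)$ rather than in matrices over the bare free monoid algebra.
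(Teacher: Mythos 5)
The paper does not actually prove this statement: it is quoted as a known criterion of Jacob \cite{Ja}, so there is no in-paper proof to compare yours against and I can only judge the proposal on its own terms. Your strategy is the right one for each direction (Hadamard products with the word problem of $F$ plus an erasing morphism for sufficiency; a pushdown/Shamir-type encoding for necessity), but both halves contain genuine gaps.

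On sufficiency, two points. First, in $K\lla C\rra$ the matrices $\tilde\mu(a)\in M_n(K\langle B\rangle)$ do not commute with the letters of $A$, so $\bigl(I-\sum_{a\in A}\tilde\mu(a)\,a\bigr)^{-1}$ expands into words of the interleaved form $u_1a_1u_2a_2\cdots u_ka_k$ with $u_i\in B^*$; its $(i,j)$ entry is \emph{not} $\sum_{w\in A^*}\tilde\mu(w)_{i,j}\,w$ and is not supported on $B^*A^*$. Consequently the Hadamard product with the characteristic series of $L_\gamma A^*$ extracts the wrong coefficients. This is repairable (take the Hadamard product with $f^{-1}(\gamma)$ for the morphism $f:C^*\to F$ that kills $A$ and reads $B$, so that the shuffled placement of the $B$-letters is irrelevant), but as written the step is false. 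Second, the erasing morphism is not a side issue you can wave at: substituting $b\mapsto 1$ in a proper algebraic system generically produces constant terms and bare monomials $\xi_j$, destroying properness and with it the unique-solution argument. What you need is the closure of algebraic series under limited (finite-fiber) erasing morphisms; that is a real theorem requiring either a proof or a precise citation, not a remark.

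On necessity, the crux is that the free group has two-sided cancellation while a stack performs only one-sided matching: in $F$ one has $X^{-1}X=1$ as well as $XX^{-1}=1$, so a sequence of ``operations'' that pops a symbol never pushed and later pushes it back again contributes to the coefficient of $\gamma$ in $(\mu w)_{i_0,j_0}$. Shamir's original construction avoids this by representing into the polycyclic monoid, where $y_ix_j=0$ for $i\neq j$ and spurious terms literally vanish; upgrading the target to the group algebra of a free group --- which is precisely the content of Jacob's theorem --- requires showing that these spurious contributions cancel or can be filtered out by the control states. That verification is the heart of the proof and is not covered by ``routine bookkeeping.''
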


The following is an immediate consequence of Theorem~\ref{Ja-thm}.

\begin{coro}\label{functoriality}
If $S\in K\lla A \rra$ is an algebraic power series and $\varphi: B^* \to A^*$ is a homomorphism
of finitely generated free monoids, then the power series
\begin{equation*}
\sum_{w\in B^*} \, \left(S, \varphi(w) \right)\, w \in K\lla B \rra
\end{equation*}
is algebraic.
\end{coro}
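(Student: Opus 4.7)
The plan is to invoke Jacob's criterion (Theorem~\ref{Ja-thm}) in both directions: first to extract a linear representation for $S$ in matrices over a free group algebra, then to transport that representation through $\varphi$, and finally to recognize the resulting data as describing the series $T := \sum_{w\in B^*} (S, \varphi(w))\, w$.

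More precisely, I would start by applying Theorem~\ref{Ja-thm} to the algebraic series $S \in K\lla A \rra$. This supplies a free group $F$, a monoid homomorphism $\mu : A^* \to M_n(KF)$, a pair of indices $i,j$, and an element $\gamma \in F$ such that
\begin{equation*}
(S,w) = \bigl( (\mu w)_{i,j}, \gamma \bigr) \qquad \text{for all } w \in A^*.
\end{equation*}
Next, I would define $\mu' : B^* \to M_n(KF)$ as the composition $\mu' = \mu \circ \varphi$. Since $B^*$ is free on the finite set $B$, it is enough to check that $\mu'$ is a monoid homomorphism, which is immediate from the fact that $\varphi$ and $\mu$ are both monoid homomorphisms; hence $\mu'$ is a representation of $B^*$ by matrices over the same group algebra $KF$.

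With the same indices $i,j$ and the same element $\gamma$, one then computes for every $w \in B^*$
\begin{equation*}
(T,w) = \bigl( S, \varphi(w) \bigr) = \bigl( (\mu(\varphi(w)))_{i,j}, \gamma \bigr) = \bigl( (\mu' w)_{i,j}, \gamma \bigr).
\end{equation*}
Applying the converse direction of Theorem~\ref{Ja-thm} to the quadruple $(F, \mu', (i,j), \gamma)$ then yields that $T$ is an algebraic element of $K\lla B \rra$.

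There is no real obstacle here: the proof is essentially a formal manipulation. The only point requiring a moment's attention is that the free group $F$, the indices $i,j$, and the distinguished element $\gamma \in F$ produced by Jacob's criterion for $S$ can be reused verbatim for $T$; only the representation changes, by precomposition with $\varphi$. The finiteness of $B$ is used precisely to ensure that $B^*$ is a finitely generated free monoid, so that Theorem~\ref{Ja-thm} applies in the reverse direction.
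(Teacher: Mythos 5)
Your argument is correct and is exactly the one the paper intends: the corollary is stated there as an immediate consequence of Theorem~\ref{Ja-thm}, obtained by precomposing the representation $\mu$ with $\varphi$ and keeping the same $F$, indices $i,j$, and $\gamma$. Nothing further is needed.
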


As a consequence of Theorem~\ref{CS-thm}\,(5) and of Corollary~\ref{functoriality}, we obtain the following.

\begin{coro}\label{coro-alg}
Let $f: A^* \to F$ be a homomorphism from~$A^*$ to a free group~$F$. 
Then the characteristic series of~$f^{-1}(1)\in K\lla A \rra$ is algebraic.
\end{coro}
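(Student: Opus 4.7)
The plan is to combine Theorem~\ref{CS-thm}(5) with the functoriality property of Corollary~\ref{functoriality}. The strategy is to factor $f: A^* \to F$ as a composition $\pi \circ \varphi$, where $\varphi: A^* \to B^*$ is a monoid map into the free monoid on the generators and their inverses of a free group $F$, and $\pi: B^* \to F$ is the canonical surjection; then pull back the (algebraic) characteristic series of $\pi^{-1}(1)$ along $\varphi$.

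First, since $f^{-1}(1)$ is unchanged when we replace $F$ by the subgroup $F'$ generated by $f(A)$, and since $A$ is finite (as required by Corollary~\ref{functoriality}, which concerns finitely generated free monoids), the Nielsen--Schreier theorem lets me assume from the outset that $F$ itself is finitely generated, say free on $a_1, \ldots, a_n$. Set $B = \{a_1, a_1^{-1}, \ldots, a_n, a_n^{-1}\}$ and let $\pi: B^* \to F$ be the canonical surjection. For each $a \in A$, pick a word $\varphi(a) \in B^*$ representing $f(a) \in F$; extending multiplicatively yields a monoid homomorphism $\varphi: A^* \to B^*$ satisfying $\pi \circ \varphi = f$.

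By Theorem~\ref{CS-thm}(5), the characteristic series $S \in K\lla B \rra$ of the language $\pi^{-1}(1)$ is algebraic. Applying Corollary~\ref{functoriality} to $\varphi$, the series $T = \sum_{w\in A^*}\, (S, \varphi(w))\, w \in K\lla A \rra$ is algebraic. But $(S, \varphi(w))$ equals $1$ when $\pi(\varphi(w)) = f(w) = 1$ and $0$ otherwise, so $T$ is precisely the characteristic series of $f^{-1}(1)$. No step of the argument is really difficult; the only point requiring brief care is the initial reduction to finitely generated $F$ together with the choice of lift $\varphi$, after which the two cited results mesh mechanically to yield the conclusion.
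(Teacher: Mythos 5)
Your proof is correct and is exactly the argument the paper intends: the paper gives no explicit proof, stating only that the corollary follows from Theorem~\ref{CS-thm}\,(5) and Corollary~\ref{functoriality}, and your factorization $f = \pi\circ\varphi$ with the pullback of the characteristic series of $\pi^{-1}(1)$ is the natural way to combine them. The preliminary reduction to a finitely generated $F$ is a reasonable point of care and is handled correctly.
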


\section{Proof of Theorem~\ref{main}}\label{proof-main}

Let $M$ be a $d\times d$-matrix. 
As observed in the introduction, it is enough to establish Theorem~\ref{main} when all the entries of~$M$
belong to $\ZZ F$.

We first reformulate the formal power series~$g_M$ and~$P_M$
of~\eqref{PM} as the generating series and the zeta function of a noncommutative formal power series,
respectively.

Let $A$ be the alphabet whose elements are triples $[g,i,j]$, where $i,j$ are integers such that
$1\leq i,j \leq d$ and $g \in F$ appears in the $(i,j)$-entry~$M_{i,j}$ of~$M$,
i.e., $(M_{i,j},g) \neq 0$.
We define the noncommutative formal power series $S_M \in K\lla A \rra$ as follows: 
for $w = [g_1,i_1,j_1] \cdots [g_n,i_n,j_n] \in A^+$, the scalar $(S_M,w)$ vanishes unless we have
\begin{itemize}
\item[(a)]
$j_n = i_1$ and $j_k = i_{k+1}$ for all $k=1, \ldots, n-1$,

\item[(b)]
$g_1\cdots g_n = 1$ in the group~$F$,
\end{itemize}
in which case $(S_M,w)$ is given by 
\begin{equation*}
(S_M,w) = (M_{i_1, j_1}, g_1) \cdots (M_{i_n, j_n}, g_n) \in K \, .
\end{equation*}
By convention, $(S_M,1) = d$.

\begin{prop}\label{identification}
The generating series and the zeta function of~$S_M$ are related to
the formal power series $g_M$ and $P_M$ of\,\eqref{PM} by
\begin{equation*}
g_{S_M} - d = g_M \quad\text{and}\quad \zeta_{S_M} = P_M \, .
\end{equation*}
\end{prop}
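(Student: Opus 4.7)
The proof is essentially bookkeeping: we unpack both sides and identify the combinatorial objects they enumerate. My plan is to establish the first identity $g_{S_M} - d = g_M$ by comparing coefficients of $t^n$ for each $n \geq 1$; the second identity $\zeta_{S_M} = P_M$ then follows directly from the defining formulas, since both are the exponential of $\sum_{n\geq 1} c_n \, t^n/n$ for the same sequence of scalars $c_n$.

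The key computation is to expand
\begin{equation*}
\Tr(M^n) = \sum_{i_1, \ldots, i_n} M_{i_1,i_2} \, M_{i_2,i_3} \cdots M_{i_{n-1},i_n} \, M_{i_n,i_1}
\end{equation*}
and, for each index sequence, to expand each entry $M_{i_k,i_{k+1}} = \sum_{g_k \in F} (M_{i_k,i_{k+1}}, g_k) \, g_k$. Setting $j_k = i_{k+1}$ for $k = 1, \ldots, n-1$ and $j_n = i_1$, one obtains
\begin{equation*}
\Tr(M^n) = \sum \, (M_{i_1,j_1}, g_1) \cdots (M_{i_n,j_n}, g_n) \, g_1 \cdots g_n,
\end{equation*}
summed over all tuples $(i_k, j_k, g_k)_{k=1}^n$ satisfying the cyclic index condition~(a). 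Taking the coefficient at $1 \in F$ then selects precisely those tuples that also satisfy $g_1 \cdots g_n = 1$, i.e.\ condition~(b). Each such surviving tuple is exactly the data of a word $w = [g_1,i_1,j_1] \cdots [g_n,i_n,j_n] \in A^+$ with $(S_M, w) \neq 0$, and the product of coefficients is by definition $(S_M,w)$. Hence
\begin{equation*}
a_n(M) = (\Tr(M^n),1) = \sum_{|w| = n} \, (S_M, w),
\end{equation*}
which by~\eqref{def-g} is precisely the coefficient of $t^n$ in $g_{S_M} - (S_M,1) = g_{S_M} - d$. This proves the first identity.

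For the second identity, plugging the same equality $a_n(M) = \sum_{|w|=n} (S_M,w)$ into the definitions~\eqref{PM} and~\eqref{def-z} shows that $P_M$ and $\zeta_{S_M}$ are exponentials of identical series. Equivalently, one may note that both $t \, P_M'/P_M$ and $t \, \zeta_{S_M}'/\zeta_{S_M}$ equal $g_M = g_{S_M} - d$ (the former by the identity displayed just after~\eqref{PM}, the latter by~\eqref{dlog}), and that both formal power series have constant term~$1$. The only subtle point is the normalization convention $(S_M,1) = d$, which is tailored precisely so that the constant term $d$ appearing in $g_{S_M}$ cancels the absence of an $n=0$ term in $g_M$; no genuine obstacle arises.
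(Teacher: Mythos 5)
Your proposal is correct and follows essentially the same route as the paper: expand $\Tr(M^n)$ over cyclic index sequences and group elements, extract the coefficient of $1\in F$ to get $a_n(M)=\sum_{|w|=n}(S_M,w)$, and then read off both identities from the defining formulas. No further comment is needed.
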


\begin{proof}
For $n\geq 1$ we have
\begin{eqnarray*}
\Tr(M^n) 
& = & \sum\, M_{i_1, j_1}  \cdots M_{i_n, j_n} \\
& = & \sum\, (M_{i_1, j_1}, g_1)  \cdots (M_{i_n, j_n},g_n)\, g_1 \cdots g_n \, ,
\end{eqnarray*}
where the sum runs over all indices $i_1, j_1, \ldots, i_n, j_n$ satisfying Condition\,(a) above
and over all $g_1, \ldots, g_n \in F$. 
Then
\begin{equation*}
a_n(M) = (\Tr(M^n),1)
= \sum\, (M_{i_1, j_1}, g_1)  \cdots (M_{i_n, j_n},g_n) \, ,
\end{equation*}
where Conditions\,(a) and\,(b) are satisfied. Hence,
\begin{equation*}
a_n(M) = \sum_{w\in A^* , \, |w| = n}\, (S,w) \, ,
\end{equation*}
which proves the proposition in view of\,\eqref{PM},\,\eqref{def-g} and\,\eqref{def-z}.
\end{proof}

We next establish that $S_M$ is both cyclic in the sense of Section~\ref{sec-cyclic}
and algebraic in the sense of Section~\ref{sec-alg}.

\begin{prop}\label{cyclic-prop}
The noncommutative formal power series~$S_M$ is cyclic.
\end{prop}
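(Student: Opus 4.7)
The plan is to verify the two conditions in the definition of cyclicity directly from the construction of~$S_M$. Both conditions will follow from the fact that the three defining constraints on $(S_M, w)$ — the index condition~(a), the group-theoretic condition~(b), and the scalar product formula — are all invariant under cyclic rotation of the sequence of letters, together with the torsion-freeness of the free group~$F$.

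For condition~(i), I take $u, v \in A^*$; if either is empty the equality is trivial, so I write $u = [g_1, i_1, j_1]\cdots[g_m, i_m, j_m]$ and $v = [g_{m+1}, i_{m+1}, j_{m+1}]\cdots[g_n, i_n, j_n]$. First I would observe that condition~(a) for $uv$ and for $vu$ both amount to the same cyclic system $j_k = i_{k+1}$ with indices read modulo~$n$, since~(a) already contains the wrap-around $j_n = i_1$. Next, condition~(b) for $uv$ reads $g_1 \cdots g_n = 1$, while for $vu$ it reads $g_{m+1}\cdots g_n g_1 \cdots g_m = 1$; these are equivalent in any group because $xy = 1$ if and only if $yx = 1$. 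Finally, the scalar products $(S_M, uv)$ and $(S_M, vu)$ are both equal to $(M_{i_1, j_1}, g_1) \cdots (M_{i_n, j_n}, g_n)$ since $K$ is commutative.

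For condition~(ii), I write $w = [g_1, i_1, j_1]\cdots[g_n, i_n, j_n]$ and fix $r \geq 2$. Condition~(a) for $w^r$ reduces to condition~(a) for $w$: the internal equalities $j_k = i_{k+1}$ inside a single copy of~$w$ and the equality $j_n = i_1$ between consecutive copies together make up exactly~(a) for~$w$. If~(a) fails, both coefficients vanish. Assuming~(a) holds, the group product over $w^r$ is $(g_1 \cdots g_n)^r$; since~$F$ is a free group and hence torsion-free, the equality $(g_1\cdots g_n)^r = 1$ is equivalent to $g_1 \cdots g_n = 1$. Therefore~(b) holds for~$w^r$ if and only if it holds for~$w$, in which case the scalar product for~$w^r$ factorises as $\left(\prod_{k=1}^n (M_{i_k, j_k}, g_k)\right)^r = (S_M, w)^r$. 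Otherwise both sides vanish, and $0 = 0^r$.

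The only genuinely non-bookkeeping step is the appeal to torsion-freeness of~$F$ in condition~(ii); everything else records the cyclic symmetries already built into the definition of $S_M$.
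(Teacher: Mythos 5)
Your proof is correct and follows essentially the same route as the paper's: both arguments verify conditions (i) and (ii) by checking that the index condition (a), the group condition (b), and the product of coefficients are invariant under cyclic rotation, and both invoke the torsion-freeness of the free group $F$ to pass from $(g_1\cdots g_n)^r=1$ to $g_1\cdots g_n=1$. Your write-up is slightly more explicit about edge cases (empty factors, the vanishing cases $0=0^r$) but adds no new idea.
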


\begin{proof}
(i) Conditions\,(a) and\,(b) above are clearly preserved under cyclic permutations. Hence we also have
\begin{equation*}
(S_M,w) = (M_{i_2, j_2}, g_2) \cdots (M_{i_n, j_n}, g_n) \, (M_{i_1, j_1}, g_1)
\end{equation*}
when $w = [g_1,i_1,j_1] \cdots [g_n,i_n,j_n]$ such that Conditions\,(a) and\,(b) are satisfied.
It follows that $(S,uv) = S(vu)$ for all $u,v \in A^*$.

(ii) If $w$ satisfies Conditions\,(a) and\,(b), so does $w^r$ for $r\geq 2$. 
Conversely, if $w^r$ ($r\geq 2$) satisfies Condition\,(a), then since 
\begin{equation*}
w^r = [g_1,i_1,j_1] \cdots [g_n,i_n,j_n]\, [g_1,i_1,j_1] \cdots  
\end{equation*}
we must have $j_n = i_1$ and $j_k = i_{k+1}$ for all $k=1, \ldots, n-1$,
and so $w$ satisfies Condition\,(a).

If $w^r$ ($r\geq 2$) satisfies Condition\,(b), i.e., $(g_1\cdots g_n)^r = 1$, then $g_1\cdots g_n = 1$ since $F$ is torsion-free.
Hence $w$ satisfies Condition\,(b).
It follows that
$(S,w^r) = \left((M_{i_1, j_1}, g_1) \cdots (M_{i_n, j_n}, g_n)\right)^r = (S,w)^r$.
\end{proof}

\begin{prop}\label{alg-prop}
The noncommutative formal power series~$S_M$ is algebraic.
\end{prop}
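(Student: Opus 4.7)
My plan is to exhibit $S_M$ as the Hadamard product of a rational series (encoding the index-matching condition (a) together with the matrix coefficients) and the characteristic series of an algebraic language (encoding condition (b)), then to invoke Theorem~\ref{CS-thm}\,(4).

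First I would note that the alphabet $A$ is finite, because each entry $M_{i,j}$ is a Laurent polynomial and therefore has finite support in~$F$. Next, I would construct a linear representation $\mu : A^* \to M_d(K)$ as follows: for a letter $[g,i,j] \in A$, let $\mu([g,i,j])$ be the $d\times d$ matrix with the single nonzero entry $(M_{i,j}, g)$ in position $(i,j)$. For a word $w = [g_1,i_1,j_1] \cdots [g_n,i_n,j_n]$, the product $\mu([g_1,i_1,j_1]) \cdots \mu([g_n,i_n,j_n])$ vanishes unless $j_k = i_{k+1}$ for $k=1,\dots,n-1$, in which case it has a single nonzero entry in position $(i_1, j_n)$ equal to $(M_{i_1,j_1},g_1) \cdots (M_{i_n,j_n},g_n)$. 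Taking the trace, one obtains the scalar
\begin{equation*}
\Tr(\mu(w)) = (M_{i_1,j_1},g_1) \cdots (M_{i_n,j_n},g_n)
\end{equation*}
precisely when condition~(a) holds (and $0$ otherwise), and $\Tr(\mu(1)) = d$. Setting $(R, w) = \Tr(\mu(w))$ thus defines a recognizable, hence rational, formal power series $R \in K\lla A \rra$ by Schützenberger's theorem.

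Second, I would consider the monoid homomorphism $f : A^* \to F$ sending $[g,i,j]$ to $g$; condition~(b) is then $f(w) = 1$. By Corollary~\ref{coro-alg}, the characteristic series $\chi_{f^{-1}(1)} \in K\lla A\rra$ of the language $f^{-1}(1)$ is algebraic.

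Finally, comparing the definitions shows that $S_M$ is exactly the Hadamard product $R \odot \chi_{f^{-1}(1)}$: both coefficients agree when conditions~(a) and~(b) both hold, and vanish otherwise, while at the empty word one gets $d \cdot 1 = d = (S_M, 1)$. By Theorem~\ref{CS-thm}\,(4), the Hadamard product of a rational series and an algebraic series is algebraic, so $S_M$ is algebraic. The only nontrivial input beyond the standard stability properties is the algebraicity of $\chi_{f^{-1}(1)}$, which is the main obstacle and is furnished by the Chomsky--Schützenberger theorem in the form of Corollary~\ref{coro-alg}; everything else is a direct bookkeeping of the conditions defining $S_M$.
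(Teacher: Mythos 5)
Your proof is correct and follows essentially the same route as the paper: a rational factor encoding condition (a) together with the matrix coefficients, Hadamard-multiplied by the algebraic characteristic series of $f^{-1}(1)$ from Corollary~\ref{functoriality}/Corollary~\ref{coro-alg}, then Theorem~\ref{CS-thm}\,(4). The only (cosmetic) difference is that you merge into a single representation $[g,i,j]\mapsto (M_{i,j},g)\,E_{i,j}$ what the paper keeps as two separate rational factors $S_1$ and $S_2$ whose Hadamard product it then takes; both versions rely on the same small gloss that the trace of a matrix representation is recognizable (a finite sum of series of the form $\alpha\mu(w)\beta$).
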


\begin{proof}
We write $S_M$ as the Hadamard product of three noncommutative formal power series $S_1, S_2, S_3$.

The series~$S_1 \in K\lla A \rra$ is defined
for $w = [g_1,i_1,j_1] \cdots [g_n,i_n,j_n] \in A^+$ by
\begin{equation*}
(S_1,w) = (M_{i_1, j_1}, g_1) \cdots (M_{i_n, j_n}, g_n) 
\end{equation*}
and by $(S_1,1) = 1$.
This is a recognizable, hence rational, series with one-dimensional representation $A^* \to K$ given by
$[g,i,j] \mapsto (M_{i,j}, g)$.

Next consider the representation~$\mu$ of the free monoid~$A^*$ defined by
\begin{equation*}
\mu([g,i,j]) = E_{i,j}\, ,
\end{equation*}
where $E_{i,j}$ denotes as usual the $d \times d$-matrix with all entries vanishing, 
except the $(i,j)$-entry which is equal to~$1$.
Set
\begin{equation*}
S_2 = \sum_{w\in A^*}\, \Tr\left( (\mu w) \right) \, w  \in K\lla A \rra \, .
\end{equation*}
The power series~$S_2$ is recognizable, hence rational. 
Let us describe~$S_2$ more explicitly.
For $w= 1$, $\mu(w)$ is the identity $d \times d$-matrix; hence $(S_2,1) = d$.
For $w = [g_1,i_1,j_1] \cdots [g_n,i_n,j_n] \in A^+$, we have 
\begin{equation*}
\Tr\left( (\mu w) \right) = \Tr(E_{i_1,j_1} \cdots E_{i_n,j_n}) \, .
\end{equation*}
It follows that $\Tr\left( (\mu w) \right) \neq 0$ if and only if $\Tr(E_{i_1,j_1} \cdots E_{i_n,j_n}) \neq 0$,
which is equivalent to $j_n = i_1$ and $j_k = i_{k+1}$ for all $k=1, \ldots, n-1$, in which case
$\Tr\left( (\mu w) \right) = 1$.
Thus, 
\begin{equation*}
S_2 = d + \sum_{n\geq 1} \, \sum\, [g_1,i_1,i_2] \, [g_2,i_2,i_3] \cdots [g_n,i_n,i_1] \, ,
\end{equation*}
where the second sum runs over all elements $g_1, \ldots, g_n \in F$ and all indices $i_1, \ldots, i_n$.

Finally consider the homomorphism $f: A^* \to F$ sending $[g,i,j]$ to~$g$. 
Then by Corollary~\ref{coro-alg} the characteristic series~$S_3 \in K\lla A \rra$ of~$f^{-1}(1)$ is algebraic.

It is now clear that $S_M$ is the Hadamard product of $S_1$, $S_2$, and~$S_3$:
\begin{equation*}
S_M = S_1 \odot S_2 \odot S_3 \, .
\end{equation*}
Since by~\cite[Th.~I.5.5]{BR2} the Hadamard product of two rational series is rational,
$S_1 \odot S_2$ is rational as well. 
It then follows from Theorem~\ref{CS-thm}\,(4) and the algebraicity of~$S_3$
that $S_M = S_1 \odot S_2 \odot S_3$ is algebraic.
\end{proof}

Since $M$ has entries in~$\ZZ F$, the power series $g_{S_M} = g_M + d$ belongs to~$\ZZ[[t]]$.
It follows by Corollary~\ref{zeta-entier-coro} and Proposition~\ref{cyclic-prop}
that the power series~$P_M = \zeta_{S_M}$ has integer coefficients as well.
Moreover, by Theorem~\ref{CS-thm}\,(1) and Proposition~\ref{alg-prop},
\begin{equation*}
t \, \frac{d \log(P_M)}{dt}  = g_M
\end{equation*}
is algebraic. 

To complete the proof of Theorem\,\ref{main}, it suffices to apply the following algebraicity theorem.

\begin{theorem}\label{thmCC}
If $f \in \ZZ[[t]]$ is a formal power series with integer coefficients such that $t \, d \log f/dt$ is algebraic, then
$f$ is algebraic.
\end{theorem}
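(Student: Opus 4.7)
The plan is to reduce the statement to the algebraicity theorem of D.\ and G.\ Chudnovsky, in the refined form due to André, for solutions of linear differential equations.

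The first step is to exhibit $f$ as a solution of a first-order linear ODE with algebraic coefficients. After the standard preliminary reduction to the case $f(0) = 1$ (which causes no loss, since we may factor out a power of~$t$ and rescale if necessary), the series $g := t\,d\log f/dt$ is algebraic by hypothesis and satisfies $g(0) = 0$. Hence $h := g/t = f'/f$ is a well-defined element of $\QQ[[t]]$, and $h$ is algebraic over $\QQ(t)$: if $P(t,g) = 0$ for some nonzero polynomial $P(t,y) \in \QQ[t,y]$, then $P(t, t\, h) = 0$. Thus $f$ solves the first-order linear ODE
\begin{equation*}
f'(t) = h(t)\, f(t),
\end{equation*}
whose coefficient~$h$ is algebraic.

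The second step is to invoke the Chudnovsky--André theorem: a \emph{globally bounded} formal power series solution over~$\QQ$ of a linear ODE with algebraic-function coefficients is algebraic. Here ``globally bounded'' means, roughly, that the coefficients are $p$-integral for every prime~$p$ up to a common scaling factor, a condition that is automatic from $f \in \ZZ[[t]]$. Applying the theorem yields the algebraicity of~$f$.

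The main obstacle is not the formal reduction, which is immediate, but rather locating the precise version of the Chudnovsky--André theorem that allows the ODE to have coefficients in an algebraic extension of $\QQ(t)$ rather than merely in $\QQ(t)$, and verifying that the global-boundedness hypothesis in that formulation is genuinely implied by the integer-coefficients hypothesis on~$f$. Both points are standard in André's framework, so once the appropriate reference is cited the proof is complete.
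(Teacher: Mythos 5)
Your first step coincides with the paper's setup: one writes $f$ as a solution of the rank-one equation $y'=(g/t)\,y$ with algebraic coefficient. The problem is the second step. The statement you invoke --- ``a globally bounded formal power series solution over~$\QQ$ of a linear ODE with algebraic-function coefficients is algebraic'' --- is \emph{false} for equations of order $\geq 2$: the series $\sum_{n\geq 0}\binom{2n}{n}^2t^n$ has integer coefficients and satisfies a second-order hypergeometric equation over $\QQ(t)$, yet is transcendental (this is essentially the example $a=x+y+x^{-1}+y^{-1}$ that the paper itself uses in the introduction to show that commutativity destroys the theorem). So there is no such general Chudnovsky--Andr\'e theorem to cite. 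What is true, and what you implicitly need, is the rank-one case only --- but in that form the assertion ``integral solution of $y'=hy$ with $h$ algebraic $\Rightarrow$ algebraic solution'' is precisely the theorem being proved, not an off-the-shelf reference. The results of Chudnovsky--Chudnovsky, Andr\'e and Bost are formulated as the rank-one case of the Grothendieck--Katz $p$-curvature conjecture: algebraicity of the solutions is equivalent to the vanishing of the $p$-curvatures for almost all~$p$. Your proposal never makes contact with that hypothesis.

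Two genuine pieces of work are therefore missing. First, one must convert the integrality of $f$ into the vanishing of almost all $p$-curvatures; the paper does this via Cartier's theorem, using that the reduction of $f$ mod~$p$ furnishes a solution of the reduced system (alternatively, via the geometric argument with generalized Jacobians and the Liouville property following Andr\'e's \S6.3). Second, since $h=g/t$ is only algebraic over $\QQ(t)$ and not rational, the equation is rank one only over the curve $\overline S$ defined by $g$, not over $\PP^1$; the original Chudnovsky result for $\PP^1$ does not apply, and one needs Andr\'e's or Bost's extension to rank-one bundles over curves. You gesture at both difficulties in your final paragraph but dismiss them as ``standard''; they are in fact the entire content of the proof, and your citation as stated points to a nonexistent (indeed false) general theorem rather than to the correct $p$-curvature criterion.
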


Note that the integrality condition for~$f$ is essential: for the transcendental formal power series $f= \exp(t)$,
we have $t \, d \log f/dt = t$, which is even rational.

\begin{proof}
This result follows from cases of the Grothendieck--Katz conjecture proved in\,\cite{An2} and in\,\cite{Bo}. 
The conjecture states that if $Y' = AY$ is a linear system of differential equations
with $A \in M_d(\QQ(t))$, then far from the poles of~$A$ it has a basis of solutions that are algebraic over~$\QQ(t)$
if and only if for almost all prime numbers~$p$ the reduction mod\,$p$ of the system has a basis of
solutions that are algebraic over~$\FF_p(t)$.

Let us now sketch a proof of the theorem (see also Exercise~5 of\,\cite[p.\,160]{An1}). 
Set $g = tf'/f$ and consider the system $y' = (g/t)\, y$;
it defines a differential form~$\omega$ on an open set~$S$ 
of the smooth projective complete curve~$\overline S$ associated to~$g$.
We now follow\,\cite[\S~6.3]{An2}, which is inspired from\,\cite{CC}. 
First, extend $\omega$ to a section (still denoted~$\omega$) of~$\Omega^1_{\overline S}(-D)$, 
where $D$ is the divisor of poles of~$\omega$. For any $n\geq 2$,
we have a differential form $\sum_{i=1}^n\, p^*_i(\omega)$ on~$S^n$,
where $p_i : S^n \to S$ is the $i$-th canonical projection; 
this form goes down to the symmetric power~$S^{(n)}$.
Now let $J$ be the generalized Jacobian of~$S$ which parametrizes the invertible fibre bundles 
over~$\overline S$ that are rigidified over~$D$.
There is a morphism $\varphi: S \to J$ and a unique invariant differential form~$\omega_J$ on~$J$ such that 
$\omega = \varphi^*(\omega_J)$. For any~$n\geq 2$, $\varphi$ induces a morphism $\varphi^{(n)}: S^{(n)} \to J$ 
such that $(\varphi^{(n)})^*(\omega_J) = \sum_{i=1}^n\, p^*_i(\omega)$.
For $n$ large enough, $\varphi^{(n)}$ is dominant
and if~$\omega_J$ is exact, then so is~$\omega$. 
To prove that $\omega_J$ is exact, we note that $J$, being a scheme of commutative groups, is uniformized by~$\CC^n$.
We can now apply Theorem~5.4.3 of \emph{loc.\ cit}., 
whose hypotheses are satisfied because the solution~$f$ of the system has integer coefficients. 

Alternatively, one can use a special case of a generalized conjecture of Grothendieck-Katz proved by Bost, namely
Corollary\,2.8 in\,\cite[Sect.\,2.4]{Bo}: the vanishing of the $p$-curvatures in Condition\,(i) follows by a theorem of Cartier
from the fact that the system has a solution in $\FF_p(t)$, namely the reduction mod~$p$ of~$f$ for all prime numbers~$p$
for which such a reduction of the system exists (see Exercise\,3 of \cite[p.~84]{An1} or Theorem\,5.1 of~\cite{Ka}); 
Condition\,(ii) is satisfied since $\CC^n$ satisfies the Liouville property.
\end{proof}

A nice overview of such algebraicity results is given in Chambert-Loir's Bourbaki report\,\cite{Ch}; see especially Theorem~2.6
and the following lines.

\section{Examples}\label{sec-exa}

Kontsevich\,\cite{Ko} computed $P_{\omega}$ when ${\omega} = X_1 + X_1^{-1} + \cdots + X_n + X_n^{-1}$
considered as a $1 \times 1$-matrix, obtaining
\begin{equation}\label{eq-Pa}
P_{\omega} = \frac{2^n}{(2n-1)^{n-1}} \cdot 
\frac{\bigl( n-1 + n \bigl( 1-4(2n-1)t^2 \bigr)^{1/2}\bigr)^{n-1}}{\bigl( 1 + \bigl(1-4(2n-1)t^2 \bigr)^{1/2} \bigr)^n} ,
\end{equation}
which shows that $P_{\omega}$ belongs to a quadratic extension of~$\QQ(t)$. 

We now present similar results for the zeta functions of two matrices, 
the first one of order~$2$, the second one of order~$d\geq 3$.

\subsection{Computing $P_M$ for a $2\times 2$-matrix}\label{exa-M2}

Consider the  following matrix with entries in the ring $\ZZ\langle a, a^{-1}, b, b^{-1}, d, d^{-1} \rangle$,
where $a$, $b$, $d$ are noncommuting variables:
\begin{equation}\label{mat-2}
M =
\begin{pmatrix}
a + a^{-1} & b \\
b^{-1} & d + d^{-1}
\end{pmatrix}
.
\end{equation}

\begin{prop}
We have
\begin{equation}\label{eq-gM}
g_M = 3 \, \frac{(1-8t^2)^{1/2} -1+6t^2}{1-9t^2}
\end{equation}
and
\begin{equation}\label{eq-PM}
P_M = \frac{(1-8t^2)^{3/2} - 1 +12t^2 -24 t^4}{32t^6}\, .
\end{equation}
\end{prop}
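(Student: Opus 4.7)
The plan is to interpret $a_n(M)$ combinatorially as a count of closed walks in an auxiliary infinite graph $\Gamma$, to recognize $\Gamma$ as a disjoint union of $3$-regular trees, and then to read off $g_M$ from the classical generating function for closed walks on such a tree and to derive $P_M$ from $g_M$ by integration.

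Expanding $\Tr(M^n)$ as in the proof of Proposition~\ref{identification} and using the matrix~\eqref{mat-2}, $a_n(M)$ equals the number of closed walks of length $n$, starting at $(1,1)$ or at $(2,1)$, in the graph $\Gamma$ with vertex set $\{1,2\}\times F$ and edges $(1,g)\sim(1,ga^{\pm1})$, $(2,g)\sim(2,gd^{\pm1})$, and $(1,g)\sim(2,gb)$ for $g\in F$. Each vertex has degree $3$ by construction. Moreover, any closed walk in $\Gamma$ spells out a product of generators equal to $1$ in the free group $F$; such a product must contain two consecutive letters $s, s^{-1}$, which in $\Gamma$ corresponds to a backtracking step. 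Hence $\Gamma$ has no embedded cycle, and each of its connected components is the infinite $3$-regular tree $T_3$. By vertex-transitivity of $T_3$, both base points contribute the same number $N_n$ of closed walks, so $a_n(M)=2N_n$.

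A standard first-return decomposition on $T_3$ (set $N=1/(1-3t^2H)$, where $H$, the generating function for closed walks in a branch subtree, satisfies $H=1/(1-2t^2H)$) gives $\sum_{n\geq0}N_n t^n=4/(1+3\sqrt{1-8t^2})$. Since $g_M=2\sum_{n\geq0}N_n t^n-2$, rationalizing by multiplying numerator and denominator by $3\sqrt{1-8t^2}-1$ and using $9(1-8t^2)-1=-8(1-9t^2)$ yields the formula~\eqref{eq-gM} directly.

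For~\eqref{eq-PM} I will solve $t\,d\log P_M/dt=g_M$ with $P_M(0)=1$. The substitution $u=\sqrt{1-8t^2}$ reveals $g_M=-6(u-1)/(3u+1)$ and $dt/t=u\,du/(u^2-1)$, so after simplification
\[
\log P_M=-\int\frac{6u\,du}{(3u+1)(u+1)}.
\]
Partial fractions turn this into $\int[3/(3u+1)-3/(u+1)]\,du=\log[(3u+1)/(u+1)^3]+C$; the initial condition fixes $C=\log 2$, giving $P_M=2(3u+1)/(u+1)^3$. Multiplying numerator and denominator by $(u-1)^3$ and using $(u^2-1)^3=-512\,t^6$ together with $u^3=(1-8t^2)^{3/2}$ rewrites this as~\eqref{eq-PM}. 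The only nonroutine step is the change of variable $u=\sqrt{1-8t^2}$, which converts the integrand to a rational function of $u$; the rest is direct algebra, with matching Taylor expansions at $t=0$ serving as a sanity check.
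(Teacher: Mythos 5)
Your proof is correct, but it takes a genuinely different route from the paper's. The paper stays on the two-vertex labelled graph: it introduces the first-return languages $C_i(x)$, writes the system \eqref{eq-a}--\eqref{eq-b} of language equations, applies $\eps$, and deduces $u=v$ from the algebraic identity $(u-v)(u-1)=0$; the closed form \eqref{eq-PM} is then only \emph{verified} (it was found empirically via OEIS, cf.\ Remark~\ref{rem-PM}) by checking that $tP'(t)/P(t)=g_M$ and $P(0)=1$. You instead lift to the covering graph on $\{1,2\}\times F$ and observe --- correctly, since a nonempty word representing $1$ in a free group must contain a factor $ss^{-1}$, which here forces a backtrack --- that it is a forest of $3$-regular trees, then invoke the standard return-walk generating function on the $3$-regular tree. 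For $g_M$ the two computations are essentially isomorphic (your $H$ is the paper's $u/t^2$, and your $H=1/(1-2t^2H)$ is the paper's quadratic $2u^2-u+t^2=0$), but the tree picture explains the symmetry $u=v$ conceptually instead of by manipulation. The real gain is for \eqref{eq-PM}: your substitution $u=(1-8t^2)^{1/2}$ and partial-fraction integration of $t\,d\log P_M/dt=g_M$ \emph{derive} $P_M=2(3u+1)/(u+1)^3$, and clearing $(u-1)^3$ does reproduce \eqref{eq-PM}, whereas the paper can only confirm a formula it guessed. One small slip in your write-up: $9(1-8t^2)-1=8(1-9t^2)$, not $-8(1-9t^2)$; with the sign corrected your rationalization gives \eqref{eq-gM} exactly as stated.
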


Expanding~$P_M$ as a formal power series, we obtain 
\begin{equation*}
P_M  = 1 + \sum_{n\geq 1}\, \frac{3 \cdot 2^n}{(n+2)(n+3)} \, \binom{2n+2}{n+1} \,  t^{2n}  \, .
\end{equation*}

\begin{proof}
View the matrix~$M$ under the form of the graph of Figure\,1, with two vertices~$1,2$ and
six labeled oriented edges.
We identify paths in this graph and words on the alphabet $A = \{a, a^{-1}, b, b^{-1}, d, d^{-1}\}$.
Let $B$ denote the set of nonempty words on~$A$ which become trivial
in the corresponding free group on~$a,b,d$ and whose corresponding path is a closed path.
Then the integer $a_n(M)$ is the number of words in~$B$ of length~$n$.
We have $\eps(B) = g_M$, where $\eps : K\lla A \rra \to K[[t]]$ is the algebra map
defined in Section~\ref{general}.

\begin{figure}[ht!]
\labellist
\small\hair 2pt
\pinlabel $1$ [r] at 52 100
\pinlabel $2$ [r] at 268 100
\pinlabel $d$ [r] at 318 185
\pinlabel $d^{-1}$ [r] at 338 19
\pinlabel $a$ [r] at 0 185
\pinlabel $a^{-1}$ [r] at 6 19
\pinlabel $b$ [r] at 169 128
\pinlabel $b^{-1}$ [r] at 187 73
\endlabellist
\centering
\includegraphics[scale=0.45]{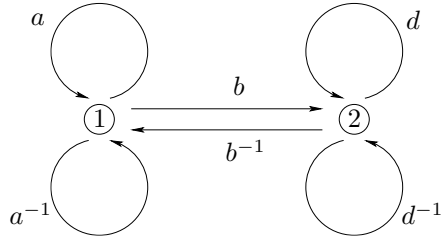}
\caption{\emph{A graph representing~$M$}}
\label{graph1}
\end{figure}

We define $B_i$ ($i= 1,2$) as the set of paths in~$B$ starting from and ending at the vertex~$i$; 
we have $B = B_1 + B_2$.
Each set~$B_i$ is a free subsemigroup of~$A^*$, freely generated by the set~$C_i$ of closed paths not passing through~$i$
(except at their ends).
The sets~$C_i$ do not contain the empty word.
We have
\begin{equation*}
B_i = C_i^+ = \sum_{n\geq 1}\, C_i^n  \qquad (i=1,2)
\end{equation*}

Given a letter~$x$, we denote by~$C_i(x)$ the set of closed paths in~$C_i$ starting with~$x$.
Any word of~$C_i(x)$ is of the form~$xwx^{-1}$, where $w \in B_j$ when $i \overset{x}{\longrightarrow} j$;
such~$w$ does not start with~$x^{-1}$. 
Identifying a language with its characteristic series and 
using the standard notation $L^* = 1 + \sum_{n\geq 1} \, L^n$ for any language~$L$,
we obtain the following two equations:
\begin{equation}\label{eq-a}
C_1(a) = a ( C_1(a) + C_1(b))^* a^{-1}
\end{equation}
and 
\begin{equation}\label{eq-b}
C_1(b) = b( C_2(d) + C_2(d^{-1}))^* b^{-1} \, .
\end{equation}

Applying the algebra map~$\eps$ and taking into account the symmetries of the graph, 
we see that the four noncommutative formal power series $C_1(a)$, $C_1(a^{-1})$, $C_2(d)$, $C_2(d^{-1})$
are sent to the same formal power series~$u \in \ZZ[[t]]$, 
while $C_1(b)$, $C_2(b^{-1})$ are sent to the same formal power series~$v$.
It follows from\,\eqref{eq-a} and\,\eqref{eq-b} that $u$ and $v$ satisfy the equations
\begin{equation}\label{eq-uv}
u = t^2(u+v)^* = \frac{t^2}{1 - u - v}
\quad\text{and} \quad
v = t^2(2u)^* = \frac{t^2}{1 - 2u} \, ,
\end{equation}
from which we deduce
\begin{equation*}
t^2 = u(1 - u - v) =  v(1- 2u)\, .
\end{equation*}
The second equality is equivalent to~$(u-v)(u-1) = 0$. Since $C_1(a)$ does not contain the empty word,
the constant term of~$u$ vanishes, hence $u-1 \neq 0$. Therefore, $u=v$.

Since $C_1 = C_1(a) + C_1(a^{-1}) + C_1(b)$ and $C_2 = C_2(d) + C_2(d^{-1}) + C_2(b^{-1})$,
we have 
$\eps(C_1) = \eps(C_2) = 2u + v = 3u$.
Therefore, $\eps(B_1) = \eps (B_2) = 3u/(1-3u)$ and
\begin{equation}\label{eq-gu}
g_M = \eps(B) = \frac{6u}{1-3u} \, .
\end{equation}
Let us now compute~$u$ using\,\eqref{eq-uv} and the equality $u=v$. 
The formal power series~$u$ satisfies the quadratic equation $2u^2 - u + t^2 = 0$.
Since $u$ has zero constant term, we obtain
\begin{equation*}
u = \frac{1 - (1 - 8t^2)^{1/2}}{4} \, .
\end{equation*}
From this and~\eqref{eq-gu}, we obtain the desired form for~$g_M$.

Let $P(t)$ be the right-hand side in Equation\,\eqref{eq-PM}. To prove $P_M = P(t)$, 
we checked that $ t P'(t)/P(t) = g_M$ and the constant term of~$P(t)$ is~$1$. 
\end{proof}

\begin{rem}\label{rem-PM}
We found Formula\,\eqref{eq-PM} for~$P(t)$ as follows. 
We first computed the lowest coefficients of~$g_M$ up to degree~$10$:
\begin{equation*}
g_M = 6\, (t^2 + 5 t^4 + 29 t^6 + 181 t^8 + 1181 t^{10})+ O(t^{12}) \, .
\end{equation*}
From this it was not difficult to find that
\begin{equation}\label{seq-PM}
P_M = 1 + 3 \, t^2 + 12 \, t^4 + 56 \, t^6 + 288 \, t^8 + 1584 \, t^{10} + O(t^{12}) \, .
\end{equation}
Up to a shift, the sequence\,\eqref{seq-PM} of nonzero coefficients of~$P_M$ 
is the same as the sequence of numbers of ``new'' intervals in a Tamari lattice computed by Chapoton in~\cite[Sect.~9]{Cp}.
(We learnt this from\,\cite{OEIS} where this sequence is listed as~A000257.)
Chapoton gave an explicit formula for the generating function~$\nu$ of these ``new'' intervals (see Eq.~(73) in \emph{loc. cit.}).
Rescaling~$\nu$, we found that $P(t) = (\nu(t^2) - t^4)/t^6$ has up to degree~$10$
the same expansion as~\,\eqref{seq-PM}. It then sufficed to check that $tP'(t)/P(t) = g_M$.

By\,\cite{OEIS} the integers in the sequence~A000257 also count the number of planar rooted bicubic maps 
with $2n$ vertices (see\,\cite[p.\,269]{Tu}).
Planar maps also come up in the combinatorial interpretation of\,\eqref{eq-Pa} 
given in\,\cite[Sect.~5]{RR} for $n=2$.
 
Note that the sequence of nonzero coefficients of $g_M/6$ is listed as A194723 in\,\cite{OEIS}.
\end{rem}

\subsection{A similar $d\times d$-matrix}\label{exa-Md}

Fix an integer $d\geq 3$ and let $M$ be the $d\times d$-matrix
with entries~$M_{i,j}$ defined by
\begin{equation*}
M_{i,i} = a_i + a_i^{-1}
\qquad\text{and}\qquad 
M_{i,j} = 
\begin{cases} 
\, b_{ij} & \text{if $i < j$\! ,}
\\
\, b_{ji}^{-1} &\text{if $j < i$\! ,}
\end{cases}
\end{equation*}
where $a_1, \ldots, a_d$, $b_{ij}$ ($1 \leq i < j \leq d$) are noncommuting variables.
This matrix is a straightforward generalization of\,\eqref{mat-2}.

Proceeding as above, we obtain two formal power series $u$ and $v$ satisfying 
the following equations similar to\,\eqref{eq-uv}:
\begin{equation*}
u = t^2(u+(d-1)v)^* = \frac{t^2}{1 - u - (d-1)v}
\end{equation*}
and
\begin{equation*}
v = t^2(2u + (d-2)v)^* = \frac{t^2}{1 - 2u - (d-2)v} \, ,
\end{equation*}
We deduce the equality $u=v$ and the quadratic equation $u(1-du) = t^2$.
We finally have
\begin{equation*}
g_M = \frac{d(d+1) u}{1 - (d+1)u} \, ,
\end{equation*}
which leads to
\begin{equation*}
g_M = \frac{d(d+1)}{2} \, \frac{(1-4d t^2)^{1/2} - 1 + 2 (d+1) t^2}{1 - (d+1)^2 t^2} .
\end{equation*}

Its expansion as a formal power series is the following:
\begin{eqnarray*}\label{seq-gM}
g_M & = & d(d+1) \, \bigl\{t^2 + (2d+1) \, t^4 + (5 d^2+4 d+1) \, t^6 \\
&&  {} + (14 d^3+14 d^2+6 d+1) \, t^8 \\
&& {} + (42 d^4+48 d^3+27 d^2+8 d+1) \, t^{10}Ê\bigr\} + O(t^{12}) \, .
\end{eqnarray*}
When $d = 2, 3, 4$, the sequence of nonzero coefficients of~$g_M/d(d+1)$ 
is listed respectively as A194723, A194724, A194725 in\,\cite{OEIS} 
(it is also the $d$-th column in Sequence A183134). 
These sequences count the $d$-ary words either empty or beginning with the first letter of the alphabet, 
that can be built by inserting $n$~doublets into the initially empty word.

We were not able to find a closed formula for~$P_M$ analogous to\,\eqref{eq-PM}. 
Using Maple, we found that, for instance up to degree~$10$, the expansion of~$P_M$ is
\begin{eqnarray*}
&& 1 + \frac{d(d+1)}{2} \, t^2 + \frac{d(d+1)(d^2+5 d+2)}{8} \, t^4 \\
&&  {} + \frac{d(d+1)(d^4+14 d^3+59 d^2+38 d+8)}{48} \, t^6 \\
&&  {} + \frac{d(d+1)(d^6+27 d^5+271 d^4+1105 d^3+904 d^2+332 d +48)}{384} \, t^8  \, .
\end{eqnarray*}

\section*{Acknowledgement}

We are most grateful to Yves Andr\'e, Jean-Beno\^\i t Bost and Carlo Gasbarri for their help in 
the proof of Theorem\,\ref{thmCC}.
We are also indebted 
to Fran\c cois Ber\-ge\-ron and Pierre Guillot for assisting us with computer computations
in the process detailed in Remark\,\ref{rem-PM},
to Fr\'ed\'eric Chapoton for his comments on Section\,\ref{exa-Md},
and to an anonymous referee for having spotted slight inaccuracies.
Thanks also to Stavros Garoufalidis for having pointed out References~\cite{GB, Sa}.

The first-named author was partially funded by LIRCO 
(Laboratoire International Franco-Qu\'eb\'ecois de Recherche en Combinatoire)
and UQAM (Universit\'e du Qu\'ebec \`a Montr\'eal).
The second-named author is supported by NSERC (Canada).

\end{document}